\newtheorem{theorem}{Theorem}[section]
\newtheorem{lemma}[theorem]{Lemma}
\newtheorem{proposition}[theorem]{Proposition}
\theoremstyle{definition}
\newtheorem{definition}[theorem]{Definition}
\newtheorem{remark}[theorem]{Remark}
\numberwithin{equation}{section}
 \newcommand{\IN}{\mathbb{N}}
 \newcommand{\IT}{\mathbb{T}}
\newcommand{\CA}{\mathcal{A}}
\newcommand{\CB}{\mathcal{B}}
\newcommand{\CD}{\mathcal{D}}
\newcommand{\CH}{\mathcal{H}}
\newcommand{\CJ}{\mathcal{J}}
\newcommand{\CL}{\mathcal{L}}
\newcommand{\CM}{\mathcal{M}}
\newcommand{\CS}{\mathcal{S}}
\newcommand{\CT}{\mathcal{T}}
\newcommand{\CK}{\mathcal{K}}
\newcommand{\ls}{(E,\mathcal{L}, \mathcal{B})}
\title{Primitive Ideals of Labelled Graph $C^*$-algebras}
\author{Menassie Ephrem}  
\address{Department of Mathematics and Statistics, Coastal Carolina
University, Conway, SC 29528--6054}
\email{menassie@coastal.edu}   
\keywords{Labelled graph, Directed graph, Cuntz--Krieger algebra, Labelled Graph C$^*$-algebra}
\subjclass{46L05, 46L35, 46L55}
\begin{document}


\date{June, 2019.}


\begin{abstract}

Given a directed graph $E$ and a labeling $\CL$, one forms the labelled graph $C^*$-algebra by taking a weakly left--resolving labelled space  $\ls$ and considering a universal generating family of partial isometries and projections.  In this paper we provide characterization for primitive ideals of labelled graph $C^*$-algebras.
\end{abstract}

\maketitle


\section{Introduction} \label{sect.z21}

Since the work of Bratteli in the early 1970's, graphs have been used
as a tool to study a large class of $C^*$-algebras.  Bratteli
classified AF algebras in terms of their diagrams, later called
\textit{Bratteli diagrams}.  The current use of directed
graphs in $C^*$-algebras goes back to the work of Cuntz and Krieger
in \cite{CK}.  In that work, they associated a $C^*$-algebra to a
finite irreducible 0--1 matrix.

Later, it was noticed that one can view Cuntz--Krieger algebras as arising from graphs.  This approach of viewing Cuntz--Krieger algebras as
$C^*$-algebras associated to graphs made the construction more visual
and communicable.

In \cite{KPRR}, Kumjian, Pask, Raeburn and Renault defined
the graph groupoid of a countable row--finite directed graph with no
sinks, and showed that the $C^*$-algebra of this groupoid coincided
with a universal $C^*$-algebra generated by partial isometries
satisfying relations naturally generalizing those given in
\cite{Cu}.  Since that time, many people have worked on
generalizing these results to arbitrary directed graphs and beyond, including higher rank graphs, ultragraphs, and labelled graphs.

After the introduction of ultragraphs by Tomforde in \cite{T} Bates and Pask, in \cite{BP1}, introduced a new class of $C^*$-algebras called $C^*$-algebras of labelled graphs.  Later, in a serious of papers (along with Carlsen), they provided some classifications of these algebras, including computations of their $K$-theories.

A directed graph $E = (E^0, E^1,~~ s,~~ r)$ consists of a
countable set $E^0$ of vertices and $E^1$ of edges, and maps $s,r:
E^1 \rightarrow E^0$ identifying the source (origin) and the range
 (terminus) of each edge. The graph is row--finite if each
vertex emits at most finitely many edges. A vertex is a sink if it
is not a source of any edge.  A path is a
sequence of edges $e_1e_2\ldots e_n$ with $r(e_i) = s(e_{i+1})$
for each $i = 1,2,\ldots , n-1$. An infinite path is a sequence
$e_1e_2\ldots$ of edges with $r(e_i)=s(e_{i+1})$ for each $i$.

For a finite path $p=e_1e_2 \ldots e_n$, we define $s(p):=s(e_1)$
and $r(p):= r(e_n)$. For an infinite path $p=e_1e_2 \ldots$, we
define $s(p):=s(e_1)$. We use the following notations

$E^*:=\bigcup_{n=0}^\infty E^n,$ where $E^n$ := $\{p:p \text{ is a path of length } n\}.$

$E^{**}~~ :=~~ E^* \cup E^\infty$, where $E^\infty$ is the set of infinite paths.

The paper is organized as follows.  In section 2 we develop some terminologies for labelled graphs.  In section 3 we briefly describe labelled graph $C^*$-algebras.  In section 4, after building the tools needed and defining some properties of labelled spaces, we provide the theorems that characterize primitive ideals of a labelled graph $C^*$-algebra.  

\section{Preliminaries}\label{preliminaries}

Let $E = (E^0, ~E^1, ~s, ~r)$ be a directed graph and let $\CA$ be a set of alphabet (colors).  A labeling is a function $\CL : E^1 \longrightarrow \CA$.  Without loss of generality, we will assume that $\CA = \CL(E^1)$.  The pair $(E,~\CL)$ is called a labelled graph.

Given a labelled graph $(E, ~\CL)$, we extend the labeling function $\CL$ canonically to the sets $E^*$ and $E^\infty$ as follows.
Using $\CA^n$ for the set of words of size $n$, $\CL$ is defined from $E^n$ into $\CA^n$ as $\CL(e_1e_2\ldots e_n) = \CL(e_1)\CL(e_2)\ldots \CL(e_n)$.  Similarly, for $p = e_1e_2\ldots \in E^\infty$, $\CL(p) = \CL(e_1)\CL(e_2)\ldots \in \CA^\infty$.

Following a tradition, we use $\CL^*(E):=\bigcup_{n=1}^\infty \CL(E^n)$, and $\CL^\infty(E) := \CL(E^\infty)$.

For a word $\alpha = a_1a_2\ldots a_n \in \CL^n(E)$, we write $$s(\alpha) := \{s(p):p\in E^n,~ \CL(p) = \alpha\}$$ and $$r(\alpha) := \{r(p):p\in E^n,~ \CL(p) = \alpha\}.$$  Similarly for $\alpha = a_1a_2\ldots \in \CL^\infty(E)$, $$s(\alpha) := \{s(p):p\in E^\infty,~ \CL(p) = \alpha\}$$

Each of these sets is a subset of $E^0$.  The use of $s$ and $r$ for an edge/path verses a label/word should be clear from the context.

A labelled graph $(E,~\CL)$ is said to be left--resolving if for each $v\in E^0$ the function $\CL:r^{-1}(v) \rightarrow \CA$ is injective.  In other words, no two edges pointing to the same vertex are labelled the same.

Let $\CB$ be a non-empty subset of $2^{E^0}$.  Given a set $A \in \CB$ we write $\CL(AE^1)$ for the set $\{\CL(e): e \in E^1 \text{ and } s(a) \in A\}$.

For a set $A \in \CB$ and a word $\alpha \in \CL^n(E)$ we define the relative range of $\alpha$ with respect to $A$ as $$r(A,\alpha):= \{r(p): \CL(p) = \alpha \text{ and } s(p)\in A\}.$$

We say $\CB$ is closed under relative ranges if $r(A,\alpha) \in \CB$ for any $A\in\CB$ and any $\alpha \in \CL^n(E)$.

$\CB$ is said to be \textbf{\textit{accommodating}} if
\begin{enumerate}
    \item $r(\alpha)\in \CB$ for each $\alpha \in \CL^*(E)$
    \item $\CB$ is closed under relative ranges
    \item $\CB$ is closed under finite intersections and unions.
\end{enumerate}

If $\CB$ is accommodating for $(E, \CL)$, the triple $\ls$ is called a labelled space.  For trivial reasons, we will assume that $\CB \neq \{\emptyset\}$

A labelled space $\ls$ is called \textit{\textbf{weakly left--resolving}} if for any $A,~B \in \CB$ and any $\alpha \in \CL^*(E)$ $$r(A \cap B, \alpha) = r(A,\alpha) \cap r(B,\alpha).$$

We say $\ls$ is \textit{\textbf{non--degenerate}} if $\CB$ is closed under relative complements.  A \textit{\textbf{normal}} labelled space is accommodating and non--degenerate.

\section{Labelled Graph $C^*$-algebras}\label{preliminaries2}

Let $\ls$ be a weakly left--resolving labelled space.  A representation of $\ls$ in a $C^*$-algebra consists of projections $\{p_A : A \in \CB \}$,
and partial isometries $\{ s_a : a \in \CA\}$, satisfying:
\begin{enumerate}
\item If $A,~B \in \CB$, then $p_Ap_B = p_{A\cap B}$, and  $p_{A\cup B} = p_A+p_B-p_{A\cap B}$.

\item For any $a,~b \in \CA$, $s_a^*s_b = p_{r(a)}\delta_{a,b}$.

\item For any $a \in \CA$ and $A \in \CB$, $s_a^*p_A = p_{r(A,a)}s_a^*$.

\item For $A \in \CB$ with $\CL(AE^1)$ finite and $A$ does not contain a sink we have $$p_A = \sum_{a \in \CL(AE^1)}s_ap_{r(A,a)}s_a^*.$$

\end{enumerate}
The \textit{labelled graph $C^*$-algebra} is the $C^*$-algebra generated by a
universal representation of $\ls$.  For a word $\mu = a_1 \cdots
a_n$ we write $s_\mu$ to mean $s_{a_1} \cdots s_{a_n}$.  One easily checks
from the relations that $s_\mu^* s_\mu = p_{r(\mu)}$ and that $s_\nu^* s_\mu = 0$ unless one of $\mu$, $\nu$ extends the other.  In this case, e.g. if $\mu = \nu\alpha$, we have $s_\nu^* s_\mu = p_{r(\nu)} s_\alpha$.  

Using $\epsilon$ to denote the empty word, we find that
\[
C^*(E,\CL,\CB) = \overline{\text{span}} \{s_\mu p_A s_\nu^* : \mu,\;\nu \in \CL(E^*) \cup \{\epsilon\} \text{ and } A\in\CB \}.
\]

Here we use $s_\epsilon$ to mean the unit element of the multiplier algebra of $C^*(E,\CL,\CB)$.


Given a weakly left--resolving labelled space $\ls$, we say that the labelled space is set--finite (respectively receiver set--finite) if $s^{-1}(A)$ (respectively $r^{-1}(A)$) is finite for any $A \in \CB$.

We will assume that the graph $E$ has no sinks.

Notice that with this assumption if, in addition, $\ls$ is set--finite, then for any $A \in \CB$ we get $$p_A = \sum_{a \in \CL(AE^1)}s_ap_{r(A,a)}s_a^*.$$

\begin{definition}
Let $\ls$ be a labelled space.  For $A,~B \in \CB$, we say that ``$A$ sees $B$" and write $A \geq B$ if there exists $\alpha \in \CL^*(E)$ such that  $B \subseteq r(A,\alpha)$.
\end{definition}
Notice that this relation is transitive.

If $z \in \IT$, then the family $\{zs_a, p_A:a\in\CA, A\in\CB\}$ is another representation which
generates $C^*\ls$, and the universal property gives a homomorphism $\gamma_z : C^*\ls \rightarrow
C^*\ls$ such that $\gamma_z(s_a) = zs_a$ and $\gamma_z(p_A) = p_A$. The homomorphism $\gamma_{\overline{z}}$ is an inverse
for $\gamma_z$, so $\gamma_z \in \text{Aut} C^*\ls$, and an $\epsilon/3$ argument shows that $\gamma$ is
a strongly continuous action of $\IT$ on $C^*\ls$, called the \textit{gauge action}.

In \cite{JKP} they provided the definitions of ``hereditary" and ``saturated" that were adopted from the works of regular graphs (see \cite{BPRS}) and reworked to fit labelled graphs.

\begin{definition} \label{heredirary} For a subset $\CH$ of $\CB$ we say that $\CH$ is hereditary if it satisfies the following:
\begin{enumerate}
    \item for any $A\in \CH$ and for any $\alpha \in \CL^*(E)$ we have $r(A,\alpha) \in \CH$.
    \item $A \cup B \in \CH$ whenever $A,B \in \CH$.
    \item If $A \in \CH$ and $B\in \CB$ with $B \subseteq A$ then $B \in \CH$.
\end{enumerate}
\end{definition}

Notice that, in addition to being closed under finite unions, $\CH$ is closed under finite intersections.  Moreover, when $\ls$ is normal, if $A\in \CH$ and $B \in \CB$ then $A \setminus B \in \CH$.

\begin{definition}
A subset $\CH$ of $\CB$ is said to be saturated if for any $A \in \CB$, $\{r(A,a): a \in \CA\} \subseteq \CH$ implies that $A\in \CH$.
\end{definition}

It is easy to see that an arbitrary intersection of hereditary (respectively saturated) sets is hereditary (respectively saturated).  For a subset $\CH$ of $\CB$, we write $\overline{\CH}$ to mean the smallest hereditary and saturated subset of $\CB$ containing $\CH$, called the saturation of $\CH$.

For a subset $\CH$ of $\CB$ and $A \in \CB$, we write $A \triangleright \CH$ to mean $\{r(A,a):a\in \CA\} \subseteq \CH$.

Suppose $\CH$ is a hereditary subset of $\CB$ where $\ls$ is a weakly left--resolving, normal, set--finite labelled space.  If $A \triangleright \CH$ and $B\triangleright\CH$ then $A \cap B \triangleright \CH$ and $A \cup B \triangleright \CH$.  This is because $r(A\cap B, a) = r(A,a) \cap r(B,a)$ and $r(A\cup B, a) = r(A,a) \cup r(B,a)$ for any $a \in \CA$.

For a hereditary subset $\CH$ write $\CH_1 := \left\{A : A \triangleright \CH \right\}$, then $\CH_1$ is hereditary (and contains $\CH$).  Also, if $\CJ$ is a hereditary and saturated set containing $\CH$ then $\CH_1 \subseteq \CJ$.  Recursively define 
\begin{equation} 
\CH_{k+1} :=  \{A : A \triangleright \CH_k \}.\label{eq:1}
\end{equation}
We write $\CH_0$ for $\CH$.

\begin{lemma}
For a weakly left--resolving set--finite labelled space $\ls$, suppose $\CH \subseteq \CB$ is hereditary, let $\CK := \bigcup_{k = 0}^\infty \CH_k$.  Then $\overline{\CH} = \CK$.
\end{lemma}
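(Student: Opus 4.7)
The plan is to establish the two inclusions separately, using the recursive definition of the $\CH_k$'s as the backbone. For $\CK \subseteq \overline{\CH}$, one simply shows by induction on $k$ that $\CH_k \subseteq \overline{\CH}$: the base case $\CH_0 = \CH \subseteq \overline{\CH}$ is immediate, and if $\CH_k \subseteq \overline{\CH}$ then any $A \in \CH_{k+1}$ satisfies $\{r(A,a):a \in \CA\} \subseteq \CH_k \subseteq \overline{\CH}$, so saturation of $\overline{\CH}$ forces $A \in \overline{\CH}$.

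The reverse inclusion $\overline{\CH} \subseteq \CK$ amounts to checking that $\CK$ is itself hereditary and saturated, since it visibly contains $\CH = \CH_0$. The first step is a preparatory induction: each $\CH_k$ is hereditary, and $\CH_k \subseteq \CH_{k+1}$. Hereditariness of $\CH_{k+1}$ is verified by unpacking the three clauses of Definition~\ref{heredirary} and reducing each back to the inductive hypothesis on $\CH_k$; for the relative-range clause one uses $r(r(A,\alpha),a) = r(A,\alpha a)$, and for the subset clause one uses $r(B,a) \subseteq r(A,a)$. The containment $\CH_k \subseteq \CH_{k+1}$ is then immediate: for $A \in \CH_k$, hereditariness of $\CH_k$ gives $r(A,a) \in \CH_k$ for every $a$, which is exactly $A \triangleright \CH_k$. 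Together these give a nested tower $\CH_0 \subseteq \CH_1 \subseteq \CH_2 \subseteq \cdots$ of hereditary sets.

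Granted the tower, hereditariness of $\CK$ is routine: given finitely many elements of $\CK$, pick indices $k_i$ with each in $\CH_{k_i}$, let $N = \max k_i$, push everything up into $\CH_N$ by monotonicity, and apply the hereditary clauses there. The crucial step, and the one I expect to be the main obstacle, is saturation of $\CK$. Suppose $A \in \CB$ with $A \triangleright \CK$. The hypothesis only says that each $r(A,a)$ lies in \emph{some} $\CH_{k_a}$, and a priori the indices $k_a$ could be unbounded in $a$, which would block us from concluding $A \in \CH_{N+1}$ for any single $N$. This is where the set--finite assumption enters: since $s^{-1}(A)$ is finite, the set $\CL(AE^1)$ is finite, and $r(A,a) = \emptyset$ for $a \notin \CL(AE^1)$. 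Taking $N$ to be the maximum of the finitely many indices $k_a$ for $a \in \CL(AE^1)$ (together with any index needed to place $\emptyset$, which lies in $\CK$ via the hypothesis), we get $A \triangleright \CH_N$, hence $A \in \CH_{N+1} \subseteq \CK$.

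Combining, $\CK$ is a hereditary and saturated subset of $\CB$ containing $\CH$, so $\overline{\CH} \subseteq \CK$, and equality follows.
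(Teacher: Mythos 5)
Your proposal is correct and follows essentially the same route as the paper: prove $\CK \subseteq \overline{\CH}$ by induction using saturation of $\overline{\CH}$, and prove $\overline{\CH} \subseteq \CK$ by showing $\CK$ is hereditary and saturated, with set--finiteness supplying the finite bound needed for saturation. The only difference is that you spell out details the paper leaves implicit (hereditariness of each $\CH_k$, the nested tower $\CH_k \subseteq \CH_{k+1}$, and the placement of $\emptyset$), which is a welcome tightening rather than a new argument.
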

\begin{proof}
Clearly $\CK$ is hereditary.  Let $A\in\CB$ and $a \in \CA$.   Since $\ls$ is set--finite, the set $\{a\in\CA : r(A,a) \neq \emptyset\}$ is finite.  Therefore, either $A \in \CH$ or $\exists n \in \IN$ such that $A \triangleright \CH_n$.  Which implies that $A \in \CH_{n+1}$, that is $A \in \CK$.  Therefore $\CK$ is saturated, hence $\overline{\CH} \subseteq \CK$. 

To prove  $\CK \subseteq \overline{\CH}$, we will first show that $\CH_1 \subseteq \overline{\CH}$.  Let $A \in \CH_1$.  Then $\{r(A,a):a\in \CA\} \subseteq \CH \subseteq \overline{\CH}$.  This implies that $A \in \overline{\CH}$ because $\overline{\CH}$ is saturated.  Hence $\CH_1 \subseteq \overline{\CH}$.   Similarly (inductively) $\CH_{n+1} \subseteq \overline{\CH}$.  Therefore $\CK \subseteq \overline{\CH}$.
\end{proof}
The above lemma provides us with a useful handle to the saturation of a hereditary subset of $\CB$.  We should note that this process does not apply if $\CH$ is an arbitrary (non hereditary) subset of $\CB$.
\begin{lemma}\label{lemm.BPRS62}
Suppose $\ls$ is a weakly left--resolving set--finite labelled space. Let $\CH$ be a hereditary and saturated subset of $\CB$ and let $\CD = \CB \setminus \CH$.  For a fixed $A\in \CD$, let $\CK = \{X \in \CD : A \geq X \}$. If $Y \in \overline{\CK}$ then there exist $Z\in \CK$ such that $Y \geq Z$. That is $A\geq Z$ and $Y \geq Z$.
\end{lemma}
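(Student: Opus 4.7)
My plan is to circumvent the fact that $\CK$ is not itself hereditary by first passing to a hereditary enlargement, then applying the iterative description of saturation from the preceding lemma. Specifically, I set
$$\CN:=\bigl\{X_{1}\cup\cdots\cup X_{n} : X_{i}\in\CK\cup\CH,\ n\ge 1\bigr\},$$
which is manifestly closed under finite unions. Closure under $r(\cdot,\alpha)$ uses the identity $r\bigl(\bigcup X_{i},\alpha\bigr)=\bigcup r(X_{i},\alpha)$ together with the transitivity of $\geq$ (so $A\geq X\in\CK$ forces $A\geq r(X,\alpha)$, placing $r(X,\alpha)$ into $\CK\cup\CH$). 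For closure under $\CB$-subsets, I decompose $B\subseteq\bigcup X_{i}$ as $B=\bigcup(B\cap X_{i})$, noting that accommodation keeps each $B\cap X_{i}$ in $\CB$ and that $X_{i}\subseteq r(A,\alpha_{i})$ forces $B\cap X_{i}\subseteq r(A,\alpha_{i})$, so that $B\cap X_{i}\in\CK\cup\CH$.

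Once $\CN$ is verified to be hereditary, the preceding lemma gives $\overline{\CN}=\bigcup_{k\ge 0}\CN_{k}$ with $\CN_{0}=\CN$ and $\CN_{k+1}=\{X:X\triangleright\CN_{k}\}$, and since $\CK\subseteq\CN$ yields $\overline{\CK}\subseteq\overline{\CN}$, it suffices to prove by induction on $k$ that every $Y\in\CN_{k}\setminus\CH$ admits some $Z\in\CK$ with $Y\geq Z$. For the base case, I write $Y=X_{1}\cup\cdots\cup X_{n}$; because $\CH$ is union-closed and $Y\notin\CH$, some $X_{i_{0}}\in\CK$, and because $X_{i_{0}}\in\CD$, saturation of $\CH$ applied to $X_{i_{0}}$ yields $a\in\CA$ with $r(X_{i_{0}},a)\in\CD$. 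Then $Z:=r(X_{i_{0}},a)\in\CK$ by transitivity ($A\geq X_{i_{0}}\geq Z$), while $X_{i_{0}}\subseteq Y$ gives $Z\subseteq r(Y,a)$, i.e.\ $Y\geq Z$. For the step, $Y\in\CN_{k+1}\setminus\CH$ means $Y\triangleright\CN_{k}$, so saturation of $\CH$ supplies $a$ with $r(Y,a)\in\CD$, and applying the induction hypothesis to $r(Y,a)\in\CN_{k}\setminus\CH$ produces $Z\in\CK$ with $r(Y,a)\geq Z$; prepending $a$ gives $Y\geq Z$.

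I expect the main obstacle to be the subset-closure verification for $\CN$, which hinges on the observation that $A\geq X$ automatically passes to every $\CB$-subset of $X$. Once that is secured, both the base case and the inductive step are short applications of saturation of $\CH$, essentially a relative version of the argument in the preceding lemma. A minor point to flag is the tacit restriction to $Y\notin\CH$ in the conclusion (for example $\emptyset\in\overline{\CK}\cap\CH$ has no $Z\in\CK$ below it), which is precisely what the induction produces.
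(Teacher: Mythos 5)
Your proof is correct, and while its skeleton (the iterated saturation levels of (\ref{eq:1}) plus an induction that walks a word down from $Y$ until it lands in $\CK$) is the same as the paper's, it repairs the one step the paper glosses over. The paper simply observes that ``$\CK$ is hereditary'' and then invokes $\overline{\CK}=\bigcup_{n}\CK_{n}$; but $\CK$ need not be hereditary in the sense of Definition \ref{heredirary}: it is not closed under unions (the words witnessing $A\geq X$ and $A\geq X'$ may differ), and a $\CB$-subset or a relative range of a member of $\CK$ may fall into $\CH$ (e.g.\ $\emptyset$). Your enlargement $\CN$, the finite unions of members of $\CK\cup\CH$, is genuinely hereditary --- your verification of the three conditions, using transitivity of $\geq$ and the identities $r(\bigcup X_{i},\alpha)=\bigcup r(X_{i},\alpha)$ and $B=\bigcup(B\cap X_{i})$, is sound --- so the preceding lemma legitimately gives $\overline{\CK}\subseteq\overline{\CN}=\bigcup_{k}\CN_{k}$. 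The price, which you pay correctly, is that unlike the paper (which takes an arbitrary $\alpha\in\CL(YE^{n})$) you must choose each letter via saturation of $\CH$ so as to stay outside $\CH$, both in the base case and in the inductive step. Your closing caveat is also right and is a correction to the statement rather than a gap in your argument: for $Y\in\overline{\CK}\cap\CH$ (for instance $Y=\emptyset$ when $\emptyset\in\CH$) no $Z\in\CK$ with $Y\geq Z$ can exist, since $Z\subseteq r(Y,\alpha)\in\CH$ would force $Z\in\CH$; the lemma is only true, and in Proposition \ref{mainThm1.tail} only needed, for $Y\notin\CH$, which is exactly what your induction delivers.
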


\begin{proof}
First we note that $\CK$ is non--empty.  To see this, $\{r(A,a):a \in \CA\} \subseteq \CH \Longrightarrow A \in \CH$, since $\CH$ is saturated. However $A\in\CD$, thus $r(A,a_0) \notin \CH$ for some $a_0 \in \CA$.  Because $A \geq r(A,a_0)$ we have that $r(A,a_0) \in \CK$, i.e, $\CK$ is non--empty.  Observe also that $\CK$ is hereditary.  

If $Y \in \CK$ then $A \geq Y$; take $a \in \CL(YE^1)$, such that $r(Y,a) \notin \CH$. Then $A \geq r(Y,a)$ and $Y \geq r(Y,a)$, thus $Z = r(Y,a)$ will do.   Otherwise, $Y \in \CK_n$ for some $n \in \IN$, where $\CK_n$ is as in (\ref{eq:1}).  Take $\alpha = a_na_{n-1}\ldots a_1 \in \CL(YE^n)$. Notice that $r(Y,a_na_{n-1}\ldots a_k) \in \CK_{k-1}$.  Therefore $r(Y,\alpha) \in \CK$.  Taking $Z = r(Y,\alpha)$ concludes the proof.
\end{proof}

\section{Primitive ideals of $C^*\ls$} \label{sect.prim}

In this section we will provide characterisation of primitive ideals of $C^*\ls$.  The approach we use is similar to the methods used in \cite{BPRS}; some of the definitions and computations used there and in \cite{E} need to be remade and re--manufactured to fit the structure of labelled graphs and labelled graph $C^*$-algebras.

In \cite{JKP} they fully characterised the guage--invariant ideals of $C^*\ls$ in terms of the lattice of hereditary and saturated subsets of $\CB$.  Since their whole work is on weakly left--resolving, normal, set--finite, receiver set--finite labelled space on a graph that has no sinks, we will have the same assumptions.\\

\noindent
\textbf{Assumption:} For the rest of the paper we will assume that the graph $E$ has no sinks and that the labelled spaces we consider are weakly left--resolving, normal, set--finite, receiver set--finite labelled spaces.\\

We will list a couple of results from \cite{JKP} that relevant to our discussion.

\begin{lemma}  \label{Proposition.35JKP}\cite[Proposition 3.5]{JKP}
Let $I$ be a non--zero guage--invariant ideal of $C^*\ls$.  Then the relation $$A\sim_I B \Longleftrightarrow A\cup W = B\cup W, \text{ for some }W\in \CH.$$ defines an equivalence relation $\sim_I$ on $\CB$ such that $(E, \CL, [\CB]_I)$ is a weakly left--resolving quotient labelled space of $\ls$.
\end{lemma}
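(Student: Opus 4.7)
The plan is first to identify the hereditary--saturated subset $\CH \subseteq \CB$ that the statement implicitly pairs with the ideal $I$: namely $\CH = \CH_I := \{A \in \CB : p_A \in I\}$. Using the gauge action $\gamma$ and the resulting faithful conditional expectation onto the fixed--point algebra, one checks in a standard way that $\CH_I$ is nonempty (because $I \neq 0$), hereditary, and saturated in the sense of Definitions in Section~2.

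With $\CH$ in hand, the equivalence relation axioms are essentially formal. Reflexivity and symmetry are immediate from the definition. For transitivity, suppose $A \cup W_1 = B \cup W_1$ and $B \cup W_2 = C \cup W_2$ with $W_1, W_2 \in \CH$; then $W := W_1 \cup W_2 \in \CH$ (since $\CH$ is hereditary, hence closed under finite unions), and $A \cup W = B \cup W = C \cup W$.

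The next step is to show that $\cap$, $\cup$, and the relative range $r(\cdot, \alpha)$ are well defined on $[\CB]_I$, so that $(E, \CL, [\CB]_I)$ really is a labelled space. If $A_i \cup W = B_i \cup W$ for $i = 1, 2$ with a common $W \in \CH$ (obtained by unioning the individual witnesses), then
\[
(A_1 \cap A_2) \cup W = (A_1 \cup W) \cap (A_2 \cup W) = (B_1 \cup W) \cap (B_2 \cup W) = (B_1 \cap B_2) \cup W,
\]
and the analogous identity for $\cup$ is immediate. For the relative range, if $A \cup W = B \cup W$ then applying $r(\cdot, \alpha)$ (which distributes over unions) gives $r(A, \alpha) \cup r(W, \alpha) = r(B, \alpha) \cup r(W, \alpha)$, and $r(W, \alpha) \in \CH$ by the hereditary property. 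Hence $r([A]_I, \alpha) := [r(A, \alpha)]_I$ is well defined. The weakly left--resolving condition in the quotient is then inherited directly: for $A, B \in \CB$,
\[
r([A]_I \cap [B]_I, \alpha) = [r(A \cap B, \alpha)]_I = [r(A, \alpha) \cap r(B, \alpha)]_I = r([A]_I, \alpha) \cap r([B]_I, \alpha),
\]
using the weakly left--resolving property of $\ls$ in the middle equality.

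The main obstacle is the compatibility of $\sim_I$ with the relative range operation: this is where the full hereditary structure of $\CH$ is crucial, and it is what forces us to choose $\CH$ to be closed under relative ranges rather than merely under subsets and unions. The subsidiary nuisance is choosing a single witness $W$ that works simultaneously for several equivalences at once; this is handled, as above, by unioning the individual witnesses, which is legitimate precisely because $\CH$ is hereditary.
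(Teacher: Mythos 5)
The paper contains no proof of this lemma to compare against: it is quoted directly from \cite[Proposition 3.5]{JKP}, so your argument has to be judged on its own terms. On those terms it is essentially correct and is the natural argument. Your identification of $\CH$ with $\CH_I=\{A\in\CB: p_A\in I\}$, the verification of transitivity by unioning witnesses, the well-definedness of $\cap$, $\cup$ and $r(\cdot,\alpha)$ on $[\CB]_I$ (using that $r(A\cup W,\alpha)=r(A,\alpha)\cup r(W,\alpha)$ and $r(W,\alpha)\in\CH$ by heredity), and the descent of the weakly left--resolving condition via $[A]\cap[B]=[A\cap B]$ are all sound and purely lattice--theoretic. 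The one point you gloss is the opening claim that $\CH_I$ is nonempty, hereditary and saturated. Hereditariness and saturation are in fact algebraic consequences of the defining relations ($p_{r(A,\alpha)}=s_\alpha^*p_As_\alpha\in I$, $p_B=p_Ap_B\in I$ for $B\subseteq A$, $p_{A\cup B}=p_A+p_B-p_{A\cap B}$, and $p_A=\sum_{a\in\CL(AE^1)}s_ap_{r(A,a)}s_a^*$ under the standing set--finite, no--sink assumptions), but nonemptiness is precisely where gauge--invariance of $I$ and $I\neq 0$ enter, via the faithful conditional expectation onto the fixed--point algebra; you only appeal to this as ``standard.'' Since that is the sole place the hypotheses on $I$ are used --- the rest of your proof never touches them --- a complete write--up should either prove that step or cite it separately (the paper itself absorbs it into the citation of \cite{JKP}, and later invokes the hereditary--saturated property of $\CH$ under the name of this lemma in Proposition \ref{mainThm1.tail}).
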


For a labelled space $\ls$ and a hereditary and saturated subset $\CH$ of $\CB$, write $I_\CH$ for the ideal of $C^*\ls$ generated by the set of projections $\{p_A:A\in\CH\}$.

\begin{lemma}\label{theorem52.jkp} \cite[Theorem 5.2]{JKP}
Let $I$ be a nonzero guage--invariant ideal of $C^*\ls$.  Then there exists an isomorphism of $C^*(E, \CL, [\CB]_I)$ onto the quotient algebra $C^*\ls/I$ and $I = I_\CH$, where $\CH$ is the hereditary and saturated subset consisting of $A\in\CB$ with $p_A \in I$.  Moreover the map $\CH \mapsto I_\CH$ gives an inclusion preserving bijection between the nonempty hereditary and saturated subsets of $\CB$ and the nonzero guage--invariant ideals of $C^*\ls$.

\end{lemma}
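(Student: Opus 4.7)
The plan is to break the statement into three tasks: (i) verify that $\CH := \{A \in \CB : p_A \in I\}$ is hereditary and saturated; (ii) construct an isomorphism $C^*(E, \CL, [\CB]_I) \cong C^*\ls/I$; and (iii) use this isomorphism to deduce $I = I_\CH$ and the bijection claim.

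For (i), the relation $s_a^* p_A = p_{r(A,a)} s_a^*$ combined with $s_a^* s_a = p_{r(a)}$ yields $s_a^* p_A s_a = p_{r(A,a)}$, so $p_A \in I$ forces $p_{r(A,a)} \in I$; iterating over $\alpha \in \CL^*(E)$ gives condition (1) of Definition \ref{heredirary}. Conditions (2) and (3) are immediate from $p_A p_B = p_{A \cap B}$ and $p_{A \cup B} = p_A + p_B - p_{A \cap B}$. Saturation follows from the relation $p_A = \sum_{a \in \CL(AE^1)} s_a p_{r(A,a)} s_a^*$, valid under our standing assumptions: if every $r(A,a)$ lies in $\CH$, then each summand lies in $I$, whence $p_A \in I$.

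For (ii), Lemma \ref{Proposition.35JKP} supplies the quotient labelled space $(E, \CL, [\CB]_I)$. The images $\{s_a + I\}$ and $\{p_A + I\}$ in $C^*\ls/I$ satisfy the four relations of a representation of this quotient labelled space: two projections $p_A + I$ and $p_B + I$ coincide exactly when $A \sim_I B$, and the remaining relations descend directly from those in $C^*\ls$. The universal property of $C^*(E, \CL, [\CB]_I)$ therefore yields a surjective $*$-homomorphism $\phi : C^*(E, \CL, [\CB]_I) \twoheadrightarrow C^*\ls/I$ sending $p_{[A]_I}$ to $p_A + I$ and $s_a$ to $s_a + I$. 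Injectivity would come from the gauge--invariant uniqueness theorem for labelled graph $C^*$-algebras: since $I$ is gauge--invariant the gauge action descends to $C^*\ls/I$, and $\phi$ intertwines the two actions because it sends generators to generators; moreover $\phi(p_{[A]_I}) = p_A + I$ is nonzero precisely when $A \notin \CH$, i.e.\ when $[A]_I \neq [\emptyset]_I$.

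For (iii), the inclusion $I_\CH \subseteq I$ is immediate. To obtain the reverse inclusion, run (i) and (ii) with $I_\CH$ in place of $I$: $I_\CH$ is gauge--invariant because the $p_A$ are gauge--fixed, so we obtain $C^*\ls/I_\CH \cong C^*(E, \CL, [\CB]_{\CH'})$ where $\CH' := \{A : p_A \in I_\CH\}$. If $\CH' = \CH$ then both quotients are isomorphic to $C^*(E, \CL, [\CB]_I)$, and the canonical factorisation $C^*\ls \twoheadrightarrow C^*\ls/I_\CH \twoheadrightarrow C^*\ls/I$ forces $I = I_\CH$; the inclusion--preserving bijection then drops out, with inverse $I \mapsto \{A : p_A \in I\}$. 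The main obstacle is precisely the equality $\CH' = \CH$, which amounts to showing that no $p_A$ with $A \notin \CH$ lies in $I_\CH$. The natural route is to exhibit, for each such $A$, a representation of $(E, \CL, [\CB]_\CH)$ in which $p_{[A]_\CH}$ has nonzero image; combined with gauge--invariant uniqueness this separates such $p_A$ from $I_\CH$ and closes the loop.
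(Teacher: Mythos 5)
You should first note that the paper contains no proof of this statement to compare against: it is imported verbatim from \cite[Theorem 5.2]{JKP}, and the companion fact $\CH_{I_\CH}=\CH$ that your part (iii) hinges on is likewise only quoted by the paper, from \cite[Lemma 5.1]{JKP}. So your proposal has to stand on its own as a reconstruction of Jeong--Kim--Park's argument. Parts (i) and (ii) are fine as an outline: $s_a^*p_As_a=p_{r(A,a)}$ gives heredity, relation (4) (valid under set--finiteness and no sinks) gives saturation, and the map $[A]_I\mapsto p_A+I$, $a\mapsto s_a+I$ together with the gauge--invariant uniqueness theorem is indeed the standard route to the isomorphism $C^*(E,\CL,[\CB]_I)\cong C^*\ls/I$ (though that uniqueness theorem is itself a substantial imported input, and your claim that $p_A+I=p_B+I$ \emph{exactly} when $A\sim_I B$ is only needed, and only available at that stage, in the forward direction).

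The genuine gap is in (iii), and you have in fact named it yourself: everything reduces to $\CH'=\CH$, i.e.\ to showing $p_A\notin I_\CH$ whenever $A\notin\CH$. This is precisely the content of \cite[Lemma 5.1]{JKP}, and it is the only place where real work happens: one must actually construct a representation of $\ls$ that annihilates $\{p_W: W\in\CH\}$ but sends $p_A$ to a nonzero projection for every $A\notin\CH$ (equivalently, a representation of the quotient labelled space $(E,\CL,[\CB]_\CH)$ with $p_{[A]}\neq 0$ for $[A]\neq[\emptyset]$). Your sketch gestures at ``the natural route'' without producing such a representation, so the argument is incomplete exactly where the theorem is nontrivial; without it the whole scheme is circular, since the gauge--invariant uniqueness step in (ii) applied to $I_\CH$ also presupposes this nonvanishing. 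There is also a small logical slip worth flagging: from $I_\CH\subseteq I$ and an abstract isomorphism $C^*\ls/I_\CH\cong C^*\ls/I$ one cannot conclude $I=I_\CH$, since a $C^*$-algebra can be isomorphic to a proper quotient of itself; what is needed is that the canonical surjection $C^*\ls/I_\CH\rightarrow C^*\ls/I$ is injective, which is where gauge--invariant uniqueness together with the nonvanishing of the $p_A+I_\CH$, $A\notin\CH$, must be invoked.
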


For a labelled space $\ls$ and an ideal $I$ of $C^*\ls$, write $\CH_I$ for the set $\{A\in \CB:p_A \in I\}$. In their proof of Lemma \cite[Lemma 5.1]{JKP}, for a hereditary and saturated set $\CH$, they showed that $\CH_{I_\CH} = \CH$.
\begin{remark}
Suppose that $\CH_1$ and $\CH_2$ are hereditary and saturated subsets of $\CB$ in a labelled space $\ls$.  As discussed earlier, $\CH_1 \cap \CH_2$ is also a hereditary and saturated subset of $\CB$.  If $A\in \CH_1$ then $A\cap B \in \CH_1$ for any $B\in \CB$, similarly for $\CH_2$.  Therefore $\CH_1\cap\CH_2 = \{A\cap B:A\in\CH_1 \text{ and } B\in \CH_2\}$.  

If $A\in \CH_1\cap \CH_2$ then $p_A$ is in $I_{\CH_1}$ also in $I_{\CH_2}$.  Therefore $I_{\CH_1\cap \CH_2} \subseteq I_{\CH_1} \cap I_{\CH_2}$. On the other hand $\CH_{I_{\CH_1} \cap I_{\CH_2}} =\{A\in \CB: p_A \in I_{\CH_1} \cap I_{\CH_2}\} \subseteq \CH_{I_{\CH_1}} \cap \CH_{I_{\CH_2}} = \CH_1\cap \CH_2$.  Therefore $I_{\CH_1} \cap I_{\CH_2} \subseteq I_{\CH_1\cap \CH_2}$.  That is $I_{\CH_1\cap \CH_2} = I_{\CH_1} \cap I_{\CH_2}$.

In fact, if $\{\CH_i\}$ is a collection of hereditary and saturated subsets of $\CB$ then $I_{\cap{\CH_\iota}} = \cap{I_{\CH_\iota}}$.
\end{remark}

In \cite{BPRS} they introduced the concept of a maximal tail of a directed graph to help characterize (the complement of) the set of vertices that are instrumental in providing primitive ideals of a graph $C^*$-algebra.  We will reformulate their definition to fit labelled graphs and prove similar results on primitive ideals of labelled graph $C^*$-algebras.  The phrase ``maximal tail", as defined for directed graphs, makes a lot more sense for directed graphs.

\begin{definition}\label{maxi.tail} Suppose $\ls$ is a labelled space.    A subset $\CD$ of $\CB$ is
called a maximal tail if it satisfies the following three conditions.

\begin{enumerate}

\item[(a)] for any $A, B \in \CD$ there exists $C \in \CD$
such that $A \geq C$ and $B \geq C$ .

\item[(b)] for any $A \in \CD$ there exists $a \in \CA$ such that $r(A,a) \in \CD$.

\item[(c)] $A \geq B$ and $B \in \CD$ imply that $A \in \CD$ .
\end{enumerate}
\end{definition}

We will prove a result similar to 
\cite[Lemma 3.2]{E} for labelled graph $C^*$-algebras.

\begin{proposition}\label{mainThm1.tail}  Let $\ls$ be a labelled space.  If $I$ is a primitive ideal of $C^*\ls$ and $\CH = \{A \in
\CB: p_A \in I\}$, then $\CD=\CB \setminus \CH$ is a maximal tail.
\end{proposition}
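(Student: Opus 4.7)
My plan is to verify the three conditions of Definition \ref{maxi.tail} for $\CD = \CB\setminus\CH$ in turn. Conditions (b) and (c) will follow directly from the fact that $\CH$ is hereditary and saturated; condition (a) is the substantive step and will use that primitive ideals are prime.

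I would first note that $\CH = \{A\in\CB:p_A\in I\}$ is hereditary and saturated. Indeed, $s_\alpha^* p_A s_\alpha = p_{r(A,\alpha)}$ gives closure under relative ranges, $p_{A\cup B} = p_A + p_B - p_{A\cap B}$ and $p_B = p_A p_B$ (for $B\subseteq A$) give closure under finite unions and subsets, and the relation $p_A = \sum_{a\in\CL(AE^1)} s_a p_{r(A,a)} s_a^*$ available under our standing assumptions gives saturation. Condition (c) is then the contrapositive of heredity: $A\geq B$ with $A\in\CH$ forces $B\subseteq r(A,\alpha)\in\CH$ and hence $B\in\CH$. Condition (b) is the contrapositive of saturation: $A\in\CD$ means $\{r(A,a):a\in\CA\}\not\subseteq\CH$, so some $r(A,a)\in\CD$.

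For (a), fix $A,B\in\CD$ and set $\CK_A := \{X\in\CD:A\geq X\}$ and $\CK_B := \{X\in\CD:B\geq X\}$. By (the proof of) Lemma \ref{lemm.BPRS62}, both $\CK_A$ and $\CK_B$ are non-empty hereditary subsets of $\CB$, and $\overline{\CK_A},\overline{\CK_B}\subseteq\CD$. I would then pass to the gauge-invariant ideals $J_A := I_{\overline{\CK_A}}$ and $J_B := I_{\overline{\CK_B}}$. Because $\CK_A\subseteq\CD$ is non-empty, there exists $X\in\CK_A$ with $p_X\in J_A\setminus I$, so $J_A\not\subseteq I$; likewise $J_B\not\subseteq I$. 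Since primitive ideals are prime, $J_A\cap J_B\not\subseteq I$, and by the intersection formula $I_{\overline{\CK_A}\cap\overline{\CK_B}} = I_{\overline{\CK_A}}\cap I_{\overline{\CK_B}}$ from the preceding remark, there is some $C\in\overline{\CK_A}\cap\overline{\CK_B}$ with $p_C\notin I$, i.e., $C\in\CD$.

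I would finish by applying Lemma \ref{lemm.BPRS62} twice in succession. The first application to $C\in\overline{\CK_A}$ yields $Z\in\CK_A$ with $C\geq Z$; writing $Z\subseteq r(C,\gamma)$ and using that $\overline{\CK_B}$ is hereditary and contains $C$, we get $r(C,\gamma)\in\overline{\CK_B}$ and hence $Z\in\overline{\CK_B}$. A second application to $Z\in\overline{\CK_B}$ produces $D\in\CK_B$ with $Z\geq D$. Then $D\in\CD$, $B\geq D$ by membership in $\CK_B$, and $A\geq Z\geq D$ by transitivity of $\geq$, so $D$ is a common descendent of $A$ and $B$ lying in $\CD$. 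The main delicate point, and the step I expect to require the most care, is the choice of $\overline{\CK_A}$ (rather than, say, the saturation of the hereditary closure of $\{A\}\cup\CH$, which would also include elements of $\CH$): this is exactly what makes Lemma \ref{lemm.BPRS62} apply cleanly, both to locate $C$ in $\CD$ and again to descend from $C$ to the common $D$.
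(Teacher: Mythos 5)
Your proof is correct and follows essentially the same route as the paper: (b) and (c) from saturation and heredity of $\CH$, and for (a) the hereditary sets $\CK_A,\CK_B$, their saturations, the intersection formula for the associated gauge--invariant ideals, primeness, and a double application of Lemma \ref{lemm.BPRS62}. The only real difference is cosmetic: you invoke primeness of $I$ directly in $C^*\ls$ for the ideals $I_{\overline{\CK_A}}$ and $I_{\overline{\CK_B}}$, whereas the paper first passes to the quotient $C^*\ls/I_\CH$ (via Lemmas \ref{Proposition.35JKP} and \ref{theorem52.jkp}) and applies primeness of $I/I_\CH$ there.
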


\begin{proof} by Lemma \ref{Proposition.35JKP} $\CH$ is hereditary and saturated.  

Since $E$ has no sinks, and $\CH$ is saturated,
$\CD$ satisfies (b). 

To prove (c), let $A \in \CB,~~ B \in \CD$ be such that $A \geq B$.  Then $\exists \alpha \in \CL^*(E)$ such that $B \subseteq r(A,\alpha)$.  We need to show that $A \in \CD$.  Assuming the contrary, if $A \in \CH$ then $r(A,\alpha) \in \CH$, that is $B \in \CH$, since $\CH$ is hereditary.  This contradicts to $B \in \CD$. Therefore $A \in \CD$. 

We prove (a).  Let $A,~~ B \in \CD$ and let $\CH_A = \{X \in \CD:A \geq X\}$ and $\CH_B = \{X \in \CD:B \geq X\}$.  We first show that $\overline{\CH_A} \cap \overline{\CH_B}
\neq \{\emptyset\}$.  

Consider $(E, \CL, [\CB]_I)$, the quotient space of $\ls$ under the relation $$A\sim_I B \Longleftrightarrow A\cup W = B\cup W, \text{ for some }W\in \CH.$$
Then $C^*(E, \CL, [\CB]_I) \cong C^*\ls/I_\CH$.  We claim that $[A] \neq [\emptyset]$.  Assuming the contrary, if $\exists W \in \CH$ such that $A\cup W = W\cup\emptyset \Rightarrow A\subseteq W \Rightarrow A \in \CH$ which is a contradiction to $A \in \CD$.  Similarly $[B] \neq [\emptyset]$.
Therefore $I_{\overline{\CH_A}}$ and $I_{\overline{\CH_B}}$ are both a non--zero ideals of $C^*(E, \CL, [\CB]_I) \cong C^*\ls/I_\CH$, hence they are of the form $I_A/I_\CH$ and $I_B/I_\CH$, so $p_{A} +
 I_\CH \in I_{\overline{\CH_A}}$ and  $p_{B} +
 I_\CH \in I_{\overline{\CH_B}}$. Since each $I_{\overline{\CH_i}}$ is
 gauge--invariant, so is $I_{\overline{\CH_A}}
 \cap I_{\overline{\CH_B}}$.  Therefore $I_{\overline{\CH_A}} \cap
 I_{\overline{\CH_B}} = I_{\overline{\CH_A} \cap \overline{\CH_B}}$.  If
$\overline{\CH_A} \cap \overline{\CH_B} = \{\emptyset\}$ then
$I_{\overline{\CH_A}} \cap I_{\overline{\CH_B}} = \{0\} \subseteq
I/I_\CH$.  But $I/I_\CH$ is a primitive ideal of $C^*\ls/I_\CH$
therefore $I_A/I_\CH \subseteq I/I_\CH$ or $I_B/I_\CH \subseteq I/I_\CH$.
Without loss of generality, let $I_A/I_\CH \subseteq I/I_\CH$ hence
$p_{A} + I_\CH \in I/I_\CH$ implying that $p_{A} \in I_\CH$ or
$p_{A} \in I \setminus I_\CH$. But $p_{A} \in I \setminus I_\CH$
is a contradiction to the construction of $\CH$, and $p_{A} \in
I_\CH$, which implies $A \in \CH$, is also a
contradiction to $A \in \CD = \CB \setminus \CH$. Therefore
$\overline{\CH_A} \cap \overline{\CH_B} \neq \{\emptyset\}$. Let $Y \in \overline{\CH_A} \cap \overline{\CH_B}$, $Y\neq \emptyset$. Applying Lemma
\ref{lemm.BPRS62} to $\CH$ and $A$ shows that there exists $Z \in
\CD~~ \textrm{such that}~~ Y \geq Z$ and $A \geq Z$.
Since $Y \in \overline{\CH_B}$ and $\overline{\CH_B}$ is hereditary,
$Z \in \overline{\CH_B}$. Applying Lemma \ref{lemm.BPRS62} to $\CH$ and $B$ shows that there exists $C \in \CD~~
\textrm{such that}~~ Z \geq C$ and $B \geq C$.  Thus $A \geq
C$ and $B \geq C$ as needed.
\end{proof}

In order to prove the converse of Proposition \ref{mainThm1.tail} we will prove one utility lemma.

\begin{lemma} \label{maxitaillemma}
Given a labelled space $\ls$ suppose $\CD \subsetneq \CB$  is a maximal tail and let $\CH = \CB \setminus \CD$.  Then $\CH$ is hereditary and saturated. 
\end{lemma}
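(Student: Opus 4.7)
The conclusion splits into four assertions: the saturation property together with the three closure clauses (1), (2), (3) of Definition~\ref{heredirary}. My plan is to verify each by contraposition, with one of the maximal-tail axioms (a), (b), (c) doing the work each time.

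Saturation is the direct contrapositive of (b): if $A \in \CB$ satisfies $\{r(A,a) : a \in \CA\} \subseteq \CH$ yet $A \in \CD$, then (b) produces some $a$ with $r(A,a) \in \CD$, contradicting $r(A,a) \in \CH$. Clause (1) follows from (c): the word $\alpha$ itself witnesses $A \geq r(A,\alpha)$ since $r(A,\alpha) \subseteq r(A,\alpha)$, so $A \in \CH$ together with $r(A,\alpha) \in \CD$ contradicts (c). Clause (3) combines (b) and (c): if $B \subseteq A$, $A \in \CH$, and $B \in \CD$, then (b) supplies $a$ with $r(B,a) \in \CD$; since the relative range is monotone in the first argument, $r(B,a) \subseteq r(A,a)$, so the letter $a$ witnesses $A \geq r(B,a)$, and (c) forces the contradiction $A \in \CD$.

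The main obstacle is clause (2), closure under finite union. Given $A, B \in \CH$, I would assume $A \cup B \in \CD$ and iterate (b) to produce an infinite word $a_1 a_2 \cdots$ along which every prefix range $r(A \cup B, a_1 \cdots a_n) = r(A, a_1 \cdots a_n) \cup r(B, a_1 \cdots a_n)$ lies in $\CD$, while clause (1) keeps each summand inside $\CH$. At any prefix where one summand is contained in the other, in particular if one becomes empty, the $\CD$-element coincides with an $\CH$-element and the contradiction is immediate. The remaining case, in which both summands stay genuinely incomparable throughout, is where I expect to need axiom (a): applied to two $\CD$-iterates of $A\cup B$ it extracts a common descendant $C \in \CD$, and weak left-resolvingness together with normality let one split $C$ as the union of its intersections with $r(A,\beta)$ and $r(B,\beta)$ for a suitable word $\beta$; a descent driven by set-finiteness should then force one of $A$ or $B$ to see a $\CD$-element, contradicting (c).
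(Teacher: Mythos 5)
Your handling of saturation and of clauses (1) and (3) of Definition~\ref{heredirary} is correct and essentially identical to the paper's own argument: saturation is the contrapositive of (b); clause (1) follows from (c) because $A \geq r(A,\alpha)$; and clause (3) follows by taking the letter $a$ that (b) supplies for $B$, using the monotonicity $r(B,a) \subseteq r(A,a)$ to get $A \geq r(B,a)$, and applying (c).

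The genuine gap is clause (2), closure of $\CH$ under unions, and what you offer there is a plan rather than a proof --- and the plan does not close. After iterating (b) you are exactly where you started: each prefix range $r(A\cup B,\alpha_n) = r(A,\alpha_n)\cup r(B,\alpha_n)$ is an element of $\CD$ written as a union of two $\CH$-elements. The splitting step you propose for the ``incomparable'' case reproduces that situation verbatim: if $C \in \CD$ satisfies $C \subseteq r(A,\beta)\cup r(B,\beta)$, then by normality $C = \bigl(C\cap r(A,\beta)\bigr)\cup\bigl(C\setminus r(A,\beta)\bigr)$, and by the already-proved clauses (1) and (3) both pieces lie in $\CH$; nothing has decreased. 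Set-finiteness bounds the branching at each stage but provides no terminating quantity, and axiom (a) only returns further common descendants of the same self-similar form. A contradiction with (c) would require some element of $\CD$ to sit inside a single $r(A,\beta)$ or $r(B,\beta)$, which is precisely what your remaining case refuses to grant, and neither weak left-resolving nor normality forces this. For comparison, the paper's own treatment of (2) is entirely different and much shorter: assuming $A\cup B \in \CD$, it brings in $A\cap B$, notes $r(A\cap B,a)\in\CH$, observes $A\cup B \geq r(A\cap B,a)$, and then concludes $r(A\cap B,a)\in\CD$ ``from (c)'' --- but note that this invokes (c) in the reverse direction (passing membership in $\CD$ from the seeing set down to the seen set), not as stated in Definition~\ref{maxi.tail}. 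So your instinct that the union clause is the delicate point, and that axioms (a), (b), (c) as literally stated do not simply hand it over, is well founded; as it stands, your proposal establishes saturation, (1) and (3), but not (2).
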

\begin{proof}  We use roman numerals to label the paragraphs for reference within the proof.  The numbers (1), (2), (3) are in reference to Definition \ref{heredirary} and the labels (a), (b), (c) are in reference to Definition \ref{maxi.tail}.
\begin{enumerate}
    \item [(i)] Let $A \in \CB$.  If $\exists \alpha \in \CL^*(E)$ such that $r(A,\alpha) \notin \CH$ then $r(A,\alpha)\in\CD$.  But $A\geq r(A,\alpha)$, thus from (c) we get $A \in\CD$.  Therefore if $A\in\CH$ and $\alpha \in \CL^*(E)$ then $r(A,\alpha)\in\CH$.  This proves (1).

    \item[(ii)] To prove (3), let $A\in\CB$ and $B\subseteq A$.  Suppose $B\in\CD$ then by (b) $\exists a\in\CA$ such that $r(B,a) \in \CD$.  However $r(B,a) \subseteq r(A,a)$,  thus $A \geq r(B,a)$, implying that $A \in \CD$.  Hence if $A \in \CH$ and $B \subseteq A$ then $B \in \CH$.

    \item[(iii)] Now suppose $A, B \in\CH$.  Then by (ii) we have $A\cap B \in \CH$ and by (i) $r(A\cap B,a) \in \CH$ for each $a \in \CA$.  
If $A\cup B \in \CD$ then $\exists a\in \CA$ such that $r(A\cup B,a) \in \CD$.  However $r(A\cap B,a) \subseteq r(A\cup B,a)$ implying that $A\cup B \geq r(A\cap B,a)$, from (c) it follows that $r(A\cap B,a) \in \CD$; this is a contradiction.  Therefore $A\cup B\in\CH$.

\end{enumerate}

That $\CH$ is saturated follows from (b).
\end{proof}

Now we are ready to prove the converse of Proposition \ref{mainThm1.tail}.

\begin{proposition}\label{mainThm2.tail}
Given a labelled space $\ls$ suppose $\CD\subsetneq\CB$ is a maximal tail.  Let $\CH = \CB \setminus \CD$.  Then $I_\CH$ is a primitive ideal of $C^*\ls$.
\end{proposition}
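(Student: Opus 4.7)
The strategy is to show that $I_\CH$ is a prime ideal of $C^*\ls$; since labelled graph $C^*$-algebras are separable, prime coincides with primitive, so this suffices. By Lemma \ref{maxitaillemma}, $\CH$ is a proper hereditary and saturated subset of $\CB$, so Lemma \ref{theorem52.jkp} guarantees that $I_\CH$ is a proper gauge--invariant ideal and identifies the quotient $C^*\ls/I_\CH$ with the labelled graph $C^*$-algebra $C^*(E,\CL,[\CB]_I)$ of the quotient labelled space. It is therefore enough to show that this quotient is prime, i.e., that any two nonzero ideals meet in a nonzero ideal.

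The heart of the argument handles gauge--invariant ideals, where the maximal--tail hypothesis plays its essential role. Suppose $J_1,J_2$ are nonzero gauge--invariant ideals of $C^*\ls/I_\CH$; lifting through the quotient map, by Lemma \ref{theorem52.jkp} they correspond to hereditary and saturated subsets $\CH_1,\CH_2\subseteq\CB$ strictly containing $\CH$. Pick $A_i\in\CH_i\cap\CD$ for $i=1,2$. Applying condition (a) of Definition \ref{maxi.tail} to $A_1$ and $A_2$ yields $C\in\CD$ with $A_1\geq C$ and $A_2\geq C$. The relation $A_i\geq C$ means $C\subseteq r(A_i,\alpha_i)$ for some $\alpha_i\in\CL^*(E)$; hereditariness of $\CH_i$ (Definition \ref{heredirary}(1)) gives $r(A_i,\alpha_i)\in\CH_i$, and downward closure (Definition \ref{heredirary}(3)) then places $C$ in $\CH_1\cap\CH_2$. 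The Remark following Lemma \ref{theorem52.jkp} produces $p_C\in I_{\CH_1}\cap I_{\CH_2}=I_{\CH_1\cap\CH_2}$, and since $C\in\CD$ we have $p_C\notin I_\CH$; hence $J_1\cap J_2$ contains the nonzero class $[p_C]$.

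The main obstacle, and the most delicate part of the full proof, is extending primeness from gauge--invariant ideals to arbitrary ideals. My preferred route is to construct an explicit irreducible representation of $C^*\ls/I_\CH$ with trivial kernel: by iterating Definition \ref{maxi.tail}(b) one builds a sequence $A_0,A_1,A_2,\ldots$ in $\CD$ with $A_{n+1}=r(A_n,a_n)$ for suitable labels $a_n\in\CA$, producing an ``infinite labelled path'' living entirely in the tail $\CD$. One then forms a Hilbert space out of paths terminating in the tail of this sequence and defines the standard boundary--type representation of the generators $\{s_a,p_A\}$ on it. Conditions (b) and (c) of Definition \ref{maxi.tail}, together with the definition of $\CH$, should ensure that the representation annihilates precisely $I_\CH$, while the coincidence condition (a) supplies the cofinality needed to show irreducibility via a Kadison--transitivity--style argument. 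Adapting this construction from the directed--graph setting of \cite{BPRS,E} to labelled graphs, where one must be careful about the role of $\CB$ and of labels rather than vertices, is the principal technical hurdle.
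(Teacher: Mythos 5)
The part of your proposal that is actually carried out coincides, in substance, with the paper's entire proof: Lemma \ref{maxitaillemma} gives that $\CH$ is hereditary and saturated; primitivity is reduced to primeness; and primeness is tested against ideals of the form $I_{\CH_i}$ using $I_{\CH_1\cap\CH_2}=I_{\CH_1}\cap I_{\CH_2}$ together with condition (a) of Definition \ref{maxi.tail} to produce a set $C\in(\CH_1\cap\CH_2)\setminus\CH$. The paper runs this as a contradiction from $I_1\cap I_2\subseteq I_\CH$ upstairs rather than in your contrapositive form in the quotient, but it is the same argument. (A minor shared point: ``prime implies primitive'' needs separability of $C^*\ls$, hence some countability of $\CB$, which is not among the standing assumptions; neither you nor the paper addresses this.)

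The genuine gap is your third paragraph, and you have located the sensitive spot accurately: Lemma \ref{theorem52.jkp} classifies only gauge--invariant ideals, so verifying primeness only against ideals of the form $I_{\CH_i}$ does not a priori suffice. Note, however, that the paper's proof does not contain this extra step either --- it asserts ``there are saturated sets $\CH_i$ such that $I_i=I_{\CH_i}$'' for arbitrary ideals $I_1,I_2$, which Lemma \ref{theorem52.jkp} justifies only when the $I_i$ are gauge--invariant. Your proposed repair is only a plan, not a proof: the boundary--type representation is never constructed, its kernel is not shown to equal $I_\CH$, and irreducibility is not established (every claim is in the ``should ensure'' mode). Worse, that step cannot be completed at the stated level of generality: take $E$ to be a single vertex $v$ with one loop labelled $a$ and $\CB=\{\emptyset,\{v\}\}$; this labelled space satisfies all the standing assumptions, $C^*\ls\cong C(\IT)$, $\CD=\{\{v\}\}$ is a maximal tail with $\CH=\{\emptyset\}$, and $I_\CH=\{0\}$ is neither prime nor primitive in $C(\IT)$, so no irreducible representation with kernel $I_\CH$ exists. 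Closing the step you flagged genuinely requires an additional aperiodicity hypothesis on the tail (a labelled analogue of Condition (K) of \cite{BPRS}, or of ``every loop in the tail has an exit''), which neither your sketch nor the paper supplies.
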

\begin{proof}
Using Lemma \ref{maxitaillemma}, we see that $\CH$ is hereditary and saturated. To see that $I_\CH$ is a primitive ideal, it suffices to show that $I_\CH$ is prime. Suppose $I_1$, $I_2$ are ideals in $C^*\ls$ such that $I_1\cap I_2 \subseteq I_\CH$.  Then there are saturated sets $\CH_i$ such that $I_i = I_{\CH_i}$ and that $I_{\CH_1\cap \CH_2} = I_{\CH_1} \cap I_{\CH_2} \subseteq I_\CH$.  This implies $\CH_1\cap \CH_2 \subseteq \CH$.  If $\CH_1 \not\subset \CH$ and $\CH_2 \not\subset \CH$ then we can choose $A\in\CH_1\setminus\CH$ and $B\in\CH_2\setminus\CH$.  By (a) there exists $C\in\CD$ such that $A\geq C$ and $B\geq C$.  Then $C\in\CH_1\cap\CH_2\setminus\CH$; this contradicts $\CH_1\cap \CH_2 \subseteq \CH$.  Thus either $\CH_1 \subseteq \CH$ or $\CH_2 \subseteq \CH$ and $I_1 = I_{\CH_1} \subseteq I_\CH$ or $I_2 = I_{\CH_2} \subseteq I_\CH$.  This shows that $I_\CH$ is prime, and hence primitive.
\end{proof}

\begin{remark}
Proposition \ref{mainThm1.tail} and proposition \ref{mainThm2.tail} provide us a complete characterization of primitive ideals of $C^*\ls$.  However, given a labelled space $\ls$, one relevant question is ``how would one build a primitive ideal?''  It is generally easier to build a maximal tail and compute the complement, a hereditary and saturated subset $\CH$ of $\CB$, then construct a primitive ideal $I_\CH$.  This can be done by starting from a set, say $A$, and an infinite word, say $\alpha = a_1a_2 \ldots$, emanating from the set, then collect sets along the way to form $\CD_0 = \{A,~ r(A,a_1),~ r(A,a_1a_2),~ \ldots \}$.  Add in sets to form $\CD_{k+1} = \{A: r(A,a) \in \CD_k \text{ for some } a \in \CA\}$, finally take the union to get $\CD = \cup_{k\geq 0}\CD_k$, a maximal tail.
\end{remark}

For subsets $\CS, \CT$ of $2^{E^0}$, we write $\CS \gg \CT$ to mean that for each $A \in \CS$ there exist $B \in \CT$ such that $A \geq B$.  

We denote by $\chi_{\ls}$ the set of maximal tails in $\ls$.  We have formed a one--to--one correspondence between the set $\chi_{\ls}$ and the set $Prim C^*\ls$ of primitive ideals of $C^*\ls$.

\begin{remark}
Let $\CD \subseteq \CB$ be a maximal tail and let $A \in \CD$ then $r(A,a) \in \CD$ for some $a \in \CA$.  However $A \geq r(A,a)$, so $\CD \gg \CD$.
\end{remark}

\begin{theorem}
Let $\ls$ be a labelled space then there is a topology on the set $\chi_{\ls}$ of maximal tails in $\ls$ such that $\Phi: \chi_{\ls} \rightarrow Prim C^*\ls$ given by $\Phi(\CD) = I_{\CH_\CD}$ is a homeomorphism, where $\CH_\CD = \CB\setminus\CD$. 
\end{theorem}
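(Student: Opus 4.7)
My plan is to define a topology on $\chi_{\ls}$ through an explicit closure operator, then verify that $\Phi$ becomes a homeomorphism onto $Prim\,C^*\ls$ with its hull--kernel topology. Guided by the hull--kernel closure and the ideal--lattice correspondence of Lemma~\ref{theorem52.jkp}, for $X \subseteq \chi_{\ls}$ I would set
\[
\overline{X} \;:=\; \Bigl\{\CD \in \chi_{\ls}\,:\, \CD \subseteq \bigcup_{\CE \in X}\CE\Bigr\}.
\]
Extensivity, monotonicity, idempotence, and $\overline{\emptyset}=\emptyset$ follow immediately from the definition, so the only Kuratowski closure axiom requiring genuine work is the finite--union identity $\overline{X\cup Y}=\overline{X}\cup\overline{Y}$.

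This finite--union axiom is the step I expect to be the main obstacle, and it is where the maximal tail conditions really pull their weight. The inclusion $\supseteq$ is trivial by monotonicity. For $\subseteq$, I would argue by contradiction: given $\CD \in \overline{X\cup Y}$, suppose $\CD \not\subseteq \bigcup_{\CE\in X}\CE$ and $\CD \not\subseteq \bigcup_{\CE\in Y}\CE$, and pick $A \in \CD\setminus\bigcup_{\CE\in X}\CE$ and $B \in \CD\setminus\bigcup_{\CE\in Y}\CE$. Condition (a) of Definition~\ref{maxi.tail} produces $C \in \CD$ with $A\geq C$ and $B\geq C$. Since $\CD \subseteq \bigl(\bigcup_{\CE\in X}\CE\bigr)\cup\bigl(\bigcup_{\CE\in Y}\CE\bigr)$, the element $C$ lies in some $\CE \in X\cup Y$; if $\CE \in X$, condition (c) applied to $\CE$ with $A \geq C \in \CE$ forces $A \in \CE$, contradicting the choice of $A$, and the case $\CE \in Y$ contradicts the choice of $B$ in the same way.

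Finally, I would show that $\Phi$ is a homeomorphism, which reduces to matching the above closure with the hull--kernel closure transported through $\Phi$. Bijectivity of $\Phi$ is already assembled from Propositions~\ref{mainThm1.tail} and \ref{mainThm2.tail} together with the inclusion--preserving bijection of Lemma~\ref{theorem52.jkp}. For the topology, a primitive ideal $\Phi(\CD)$ lies in the hull--kernel closure of $\Phi(X)$ exactly when $\bigcap_{\CE\in X} I_{\CH_\CE}\subseteq I_{\CH_\CD}$; the remark following Lemma~\ref{theorem52.jkp} rewrites this intersection as $I_{\bigcap_{\CE\in X}\CH_\CE}$, and Lemma~\ref{theorem52.jkp} turns the containment into $\bigcap_{\CE\in X}\CH_\CE\subseteq\CH_\CD$. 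Taking complements in $\CB$ yields $\CD\subseteq\bigcup_{\CE\in X}\CE$, matching the closure defined above, so $\Phi$ is a homeomorphism.
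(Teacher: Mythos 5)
Your proposal is correct and follows the same overall strategy as the paper: define a closure operation on $\chi_{\ls}$, verify the Kuratowski axioms using the maximal-tail conditions, and then transport the hull--kernel closure through the correspondence $\CH \mapsto I_\CH$ of Lemma~\ref{theorem52.jkp} together with the identity $I_{\cap\CH_\iota} = \cap I_{\CH_\iota}$ from the preceding remark. The differences are local but worth noting. First, the paper defines $\overline{\xi} = \{\CD : \CD \gg \bigcup_{\CT\in\xi}\CT\}$ via the relation $\gg$ and only at the end observes that, for maximal tails, this coincides with $\CD \subseteq \bigcup_{\CT\in\xi}\CT$ (the reverse implication coming from condition (b), i.e.\ $\CD\gg\CD$); you take the containment formulation as the definition, which makes extensivity, monotonicity and idempotence immediate, exactly as you claim. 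Second, for the union axiom the paper fixes a tail $\CS\in\xi\cup\zeta$ and splits it as $\xi_\CS\cup\zeta_\CS$, applying condition (a) inside $\CS$, whereas you apply condition (a) to $\CD$ itself and then use condition (c) of the tails in $X$ and $Y$ to reach the contradiction; your version is the more direct adaptation of the argument in \cite{BPRS} and is, if anything, cleaner than the paper's. One caveat you share with the paper: bijectivity of $\Phi$ --- specifically surjectivity, i.e.\ that every primitive ideal equals $I_{\CH_I}$, which requires knowing primitive ideals are of the form $I_\CH$ --- is asserted rather than argued (Proposition~\ref{mainThm1.tail} only yields that $\CB\setminus\CH_I$ is a maximal tail); since the paper glosses this in exactly the same way just before the theorem, it is not a gap relative to the paper's own proof.
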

\begin{proof}
Define a topology on $\chi_{\ls}$ by $$\overline{\xi} = \{\CD \in \chi_{\ls} : \CD \gg \bigcup_{\CT \in \xi}\CT\}$$ for $\xi \subseteq \chi_{\ls}$. 
The rest of the proof is a careful adaptation of the proof of  \cite[Theorem 6.3]{BPRS} that was done for directed graphs, with adjustments to fit labelled graphs. We will include it here for completion.

We verify that the operation $\xi \mapsto \overline{\xi}$ satisfies Kuratowski's closure axioms.  For the empty set, $\overline{\emptyset} = \emptyset$ is trivially true.  Also, if $\CD \in \xi$ then $\CD \gg \CD \Rightarrow \CD \gg \bigcup_{\CT \in \xi}\CT$, so $\xi \subseteq \overline{\xi}$.  We then have $\overline{\xi} \subseteq \overline{\overline{\xi}}$.
Let $\CD \in \overline{\overline{\xi}}$ and let $A \in \CD$ then there exists $\CT \in \overline{\xi}$ and $B \in \CT$ such that $A \geq B$.  However $\CT \in \overline{\xi}$ implies that there exists $\CM \in \xi$ and $C \in \CM$ such that $B \geq C$.  This gives us $A \geq C$.  Therefore $\CD \in \overline{\xi}$, that is $\overline{\overline{\xi}} \subseteq \overline{\xi}$.

Now let $\xi, \zeta \in \chi_{\ls}$.  Since $\xi \subseteq \xi \cup \zeta$, $\overline{\xi} \subseteq \overline{\xi \cup \zeta}$, similarly $\overline{\zeta} \subseteq \overline{\xi \cup \zeta}$.  Therefore $\overline{\xi} \cup \overline{\zeta} \subseteq \overline{\xi \cup \zeta}$.  

If $\CD \in \overline{\xi \cup \zeta}$ then for any $A \in \CD$ there exists $\CS \in \xi \cup \zeta$ and $B \in \CS$ such that $A \geq B$.  We will show that $\CS \in \xi$ or $\CS \in \zeta$, if we do that we get $\CD \in \overline{\xi}$ or $\CD \in \overline{\zeta}$. 

Define the sets $$\xi_\CS := \{X \in \CS: \{X\} \gg \bigcup_{\CT \in \xi}\CT\}$$ and $$\zeta_\CS := \{X \in \CS: \{X\} \gg \bigcup_{\CT \in \zeta}\CT\}.$$  Then $\CS = \xi_\CS \cup \zeta_\CS$.  We claim that $\CS = \xi_\CS$ or $\CS = \zeta_\CS$.  If not, choose $A \in \xi_\CS \setminus \zeta_\CS$ and $B \in \zeta_\CS \setminus \xi_\CS$. Since $A, B \in \CS$ we can choose $C \in \CS$ such that $A \geq C$ and $B \geq C$.  Then either $C \in \xi_\CS$ or $C \in \zeta_\CS$.  If $C \in \xi_\CS$ then $B \geq C$ and $\{C\} \gg \bigcup_{\CT \in \xi}\CT$, implying $B \in \xi_\CS$ and this is a contradiction.  Similarly for $C \in \zeta_\CS$.  Therefore $\CS \in \xi$ or $\CS \in \zeta$.  This gives us that $\CD \in \overline{\xi} \cup \overline{\zeta}$.  That is $\overline{\xi \cup \zeta} \subseteq \overline{\xi} \cup \overline{\zeta}$. Thus $\overline{\xi \cup \zeta} = \overline{\xi} \cup \overline{\zeta}$.  Therefore the operation $\xi \mapsto \overline{\xi}$ defines a topology on $\chi_{\ls}$.

What remains is to show that $\Phi(\overline{\xi}) = \overline{\Phi(\xi)}$.  Notice that if $\CD \gg \bigcup_{\CT \in \xi}\CT$ then for each $A \in \CD$ there exists $\CT \in \xi$ such that $A \in \CT$ this is because each $\CT$ is a maximal tail;  so $\CD \gg \bigcup_{\CT \in \xi}\CT$ implies that $\CD \subseteq \bigcup_{\CT \in \xi}\CT$.  Therefore 

\begin{align*}
\Phi(\overline{\xi}) &= \{I_{\CH_\CD}: \CD \subseteq \bigcup_{\CT \in \xi}\CT \}\\
&= \{I_{\CH_\CD}: \CH_\CD \supseteq \bigcap_{\CT \in \xi}\CH_\CT \}\\
&= \{I_{\CH_\CD}: I_{\CH_\CD} \supseteq I_{\bigcap_{\CT \in \xi}\CH_\CT} \}\\
&= \{I_{\CH_\CD}: I_{\CH_\CD} \supseteq \bigcap_{\CT \in \xi}I_{\CH_\CT} \}\\
&= \overline{\Phi(\xi)}.
\end{align*}
Therefore $\Phi$ is a homeomorphism.
\end{proof}

The assumption on the labelled space $\ls$ seems very restrictive, especially the requirements that it has to be set--finite, and that $E$ should have no sinks.  We believe that some of these restrictions can be relaxed if one recreates the works of Drinen, Tomforde and others (see \cite{DT}) similar to the idea of ``adding a tail".


\end{document}